\documentclass[11pt,a4paper]{amsart}
\usepackage[utf8]{inputenc}
\usepackage[T1]{fontenc}
\usepackage{lmodern}
\usepackage{amsmath,amssymb,amsfonts,amsthm, bbm, bm}
\usepackage[margin=1.0in]{geometry}
\usepackage{graphicx}
\usepackage{caption}
\usepackage{enumitem}
\usepackage{url}
\usepackage{lipsum}
\usepackage{hyperref}
\usepackage{nameref}
\usepackage{xcolor}
\usepackage[english]{babel}

\newtheorem{theorem}{Theorem}
\newtheorem{lemma}{Lemma}
\newtheorem{remark}{Remark}
\newtheorem{corollary}{Corollary}

\theoremstyle{definition}
\newtheorem*{assumption}{Assumption}

\newcommand{\norm}[1]{\|#1\|}

\newcommand{\calP}{\mathcal{P}}

\newcommand{\E}{\mathbb{E}}

\renewcommand{\P}{\mathbb{P}}
\newcommand{\R}{\mathbb{R}}

\begin{document}

\title[Uniqueness for finite-state MFG with non-separable Hamiltonian]{A uniqueness result for finite-state mean field games with non-separable Hamiltonian} 

\thanks{
	A.C. and N.F. are supported by the project MeCoGa ``Mean field control and games'' of the University of Padova through the program STARS@UNIPD. A.C. also acknowledges support  from 
	the PRIN 2022 Project 2022BEMMLZ ``Stochastic control and games and the role of information'', 
	the INdAM-GNAMPA Project 2025 ``Stochastic control and MFG under asymmetric information: methods and applications'' and the PRIN 2022 PNRR Project P20224TM7Z  ``Probabilistic methods for energy transition''.
}




\author{%
	Alekos Cecchin}

\author{Luca Di Persio}

\author{Nicola Fraccarolo}

\address[A. Cecchin and N. Fraccarolo]{Department of Mathematics ``T. Levi-Civita'', University of Padova, 
	\newline \indent Via Trieste 63, 35121 Padova, Italy} 
\email{alekos.cecchin@unipd.it, nicola.fraccarolo@unipd.it}

\address[L. Di Persio]{
	Department of Computer Science, University of Verona, 
	\newline \indent Strada le Grazie 15, 37134 Verona, Italy}      
\email{luca.dipersio@univr.it}

	\keywords{
		mean field games; 
		finite state space; 
		non-separable Hamiltonian;
		uniqueness} 
	
	\subjclass{49N80, 60J27} 
	
	\date{April 18, 2026}

	\begin{abstract}
		We study a class of continuous-time mean field games on a finite state space with transition rates depending on the population distribution, leading to a non-separable Hamiltonian. In this setting, classical Lasry--Lions monotonicity arguments do not apply directly.
		We establish a new uniqueness result on arbitrary time horizons under a combination of strong monotonicity assumptions on the costs and quantitative bounds on the interaction term in the dynamics. 
	
	\end{abstract}

\maketitle

\section{Introduction}

Mean field games (MFGs), introduced independently by Lasry and Lions \cite{LasryLions} and by Huang, Malham\'e and Caines \cite{Huang2006}, provide a robust framework for modelling strategic interactions among a large population of rational agents. In the asymptotic regime where the number of players tends to infinity, the interaction is captured through the distribution of the state of a representative agent, leading to a coupled system of a Hamilton-Jacobi-Bellman (HJB) equation and a Kolmogorov forward equation. We refer to the monographs \cite{CardaliaguetDelarueLasryLions, CarmonaDelarue_book_I, CarmonaDelarue_book_II} for a comprehensive introduction to the theory and its probabilistic and analytical foundations.

The question of existence of solutions for MFG systems is by now rather well understood. In a nutshell, solutions to the mean field game are given by a fixed point and thus general existence results have been established under broad assumptions using both PDE and probabilistic techniques; see for instance \cite{CarmonaDelarue_book_I, lacker2015mean}. In the finite-state setting, existence can be obtained under relatively mild conditions, see in particular \cite{CecchinFischer}. 

A central issue in the theory is the uniqueness of solutions. From a modelling perspective, uniqueness is crucial as it ensures the predictability of equilibrium behaviour. From a numerical viewpoint, it ensures the convergence of approximation schemes. However, uniqueness may fail even in simple models, and counterexamples are well documented in the literature; see for instance \cite{bardi2019non, delarue2020selection} and, in the two-state setting, \cite{cecchin2019convergence, dai2019climb}. 

Uniqueness results can be obtained under short time horizon assumptions, as the fixed point becomes a contraction; see \cite{achdou2021mean, bardi2019non},  or \cite{cardaliaguet2022splitting} via splitting techniques.
A classical approach to uniqueness for any time horizon relies on monotonicity conditions. This is well understood in the case when the Hamiltonian of the problem splits as the sum of two terms, one depending on the momentum variable and another depending on the distribution of the population: this is called the separable case.  
In this case, the Lasry-Lions monotonicity condition ensures uniqueness of equilibria \cite{LasryLions, CardaliaguetDelarueLasryLions, mou2024wellposedness}, and has been extended in several directions, including displacement monotonicity \cite{gangbo2022global, meszaros2024mean} and other generalised monotonicity frameworks \cite{graber2023monotonicity, mou2024mean, mou2025second}. Analogous results are available in the finite-state setting; see for instance \cite{bertucci2019some, gomes2013continuous}.

However, in several applications the dynamics of the representative player depend explicitly on the population distribution, as in models with congestion effects, network interactions, or endogenous jump intensities; see e.g. \cite{ghattassi2025, LauriereSongTang2023}. In these cases the Hamiltonian becomes non-separable, and the standard Lasry-Lions monotonicity arguments no longer apply. Some uniqueness results in this direction have been obtained, in case of diffusion-based models; see \cite[Theorem 1.13]{achdou2021mean} following the lectures \cite{Lionscollege2}, and \cite{gangbo2022mean}. Nevertheless, these conditions are often difficult to verify when the interaction enters directly through the dynamics via distribution-dependent transition rates.

In this paper we investigate the uniqueness of mean field game systems with non-separable Hamiltonians induced by distribution-dependent dynamics. We focus on finite-state continuous-time models, which provide a tractable yet expressive framework allowing for sharp analytical results. Within this setting, we establish sufficient conditions ensuring uniqueness of solutions over arbitrary time horizons. Our approach highlights the precise role played by the interaction through the transition rates and shows how classical monotonicity conditions must be quantitatively reinforced to compensate for the lack of separability.

\subsection*{Outline of the paper}

The paper is organised as follows. 
In Section~\ref{sec:model}, we introduce the finite-state mean field game model, define the controlled dynamics and the associated cost functional, and derive the Hamiltonian and the MFG system. 
The set of assumptions is stated in Section~\ref{sec:assumptions}.
Section~\ref{sec:uniqueness} is devoted to the proof of the uniqueness result. 
In Section~\ref{sec:gradient estimate}, we discuss
uniform bounds on the finite differences of the value function.
Finally, Section~\ref{sec:conclusion} concludes the article.

\section{Main Result}

\subsection{Model Setup}\label{sec:model}

We consider the continuous time evolution of the state $X(t)$ of a representative player belonging to a finite set $\Sigma = \{1,\ldots,d\}$ over the interval $[0,T]$.
The player only knows its position and the distribution of the system, which is an element of the simplex of probability measures on $\Sigma$
\begin{equation*}
    \calP(\Sigma) = \Big\{ \mu = (\mu(1),\ldots,\mu(d)) \in [0,1]^{\Sigma} \colon \sum_{x\in\Sigma}\mu(x) = 1 \Big\}.
\end{equation*}
For a difference of vectors $\mu, \tilde\mu\in\calP(\Sigma)$, we denote $||\mu-\tilde\mu||_1= \sum_{x\in \Sigma} |\mu(x) - \tilde\mu (x)|$.

The player is allowed to control their transition rate via feedback controls $\alpha \colon [0,T] \times \Sigma \to [0,+\infty)^{\Sigma}$, where $\alpha_y(t,x)$ denotes the $y$-th coordinate of $\alpha(t,x)$.
Given two states $x$ and $y$, with $x \neq y$, the rate at which the player jumps from $x$ to $y$ is
\begin{align*}
    \P(X(t+h) = y \,|\, X(t) = x)
    = [\alpha_{y}(t,x) + b(x,\mu(t))]h + o(h),
\end{align*}
where $b \colon \Sigma \times \calP(\Sigma) \to [0,+\infty)$ is a given function, and $\mu \colon [0,T] \to \calP(\Sigma)$ is a deterministic flow of probability measures, that is, 
$\mu(t)= (\mu(t,1), \dots, \mu(t,d))$.
The control $\alpha_y(t,x)$ represents the endogenous part of the transition intensity from state $x$ to state $y$. The function $b(x,\mu)$ models an exogenous interaction term that accounts for the dependence of the dynamics on the population distribution. This interaction is the source of the Hamiltonian's non-separability.
Given the functions
\begin{align*}
    &L \colon \Sigma \times [0,+\infty)^{\Sigma} \to \R, 
&&L(x,a) = \frac{1}{2} \sum_{y \neq x} a_{y}^2, 
    \\
    &f \colon \Sigma \times \calP(\Sigma) \to \R,
    &&g \colon \Sigma \times \calP(\Sigma) \to \R, 
\end{align*}
for a fixed $\mu$, we let $J^\mu \colon [0,T] \times \Sigma \times \calP(\Sigma) \to \R$ be defined by
\begin{align*}
    J^\mu(t,x,\alpha) = \E\left[\int_{t}^{T}[L(X(s),\alpha(s,X(s))) + f(X(s),\mu(s))]ds + g(X(T),\mu(T)) \,\Big|\, X(t)=x \right].
\end{align*}
We define the value function associated to $\mu$, denoted by $u^\mu \colon [0,T] \times \Sigma \to \R$, as
\begin{equation}
    u^\mu(t,x) = \inf_{\alpha} J^\mu(t,x,\alpha).
\end{equation}

Given a function $g : \Sigma \to \R$, we denote its first finite difference by
$\Delta^yg(x)= g(y)-g(x)$, for $x,y\in\Sigma$, and denote the vector $\Delta g(x) \in \R^\Sigma$ by
\begin{equation*}
    \Delta g(x) = \big( g(y)-g(x) \big)_{y\in\Sigma}.
\end{equation*}

The Hamiltonian $H \colon \Sigma \times \calP(\Sigma) \times \R^{\Sigma} \to \R$ is given by
\begin{align*}
    H(x,\mu,u)
    &= \sup_{a\in[0,+\infty)^\Sigma} \Bigg\{-\sum_{y \neq x}[a_{y} + b(x,\mu)] [u(y) - u(x)] - \frac{1}{2}\sum_{y \neq x} a_{y}^2\Bigg\} \\
    &= \sum_{y \neq x} \sup_{a\in[0,+\infty)} \Big\{-[a + b(x,\mu)][u(y) - u(x)] - \frac{1}{2} a^2\Big\} \\
    &= \sum_{y \neq x} \Bigg[\sup_{a\in[0,+\infty)} \Big\{-a[u(y) - u(x)] - \frac{1}{2}a^2\Big\} - b(x,\mu)[u(y) - u(x)]\Bigg] \\
    &= \sum_{y \neq x} \Bigg[\frac{1}{2}[u(y) - u(x)]_{-}^2 - b(x,\mu)[u(y) - u(x)]\Bigg] \\
    &= \sum_{y \neq x} \Bigg[\frac{1}{2}[\Delta^y u(x)]_{-}^2 - b(x,\mu) {\Delta^y}u(x)\Bigg].
\end{align*}
The supremum is reached at $a_y^*(x,\Delta u) = [\Delta^{y}u(x)]_{-}$ and gives rise to the following optimal jump rate:
\begin{align*}
    \lambda_y^*(x,\Delta u) = a_y^*(x,\mu,\Delta u) + b(x,\mu) = [\Delta^{y}u(x)]_{-} + b(x,\mu).
\end{align*}
Since $u$ enters the definition only through the vector $\Delta u(x)$, we can rewrite the Hamiltonian as
\begin{align*}
    H(x,\mu,p) = \sum_{y \neq x} \left[\frac{1}{2}(p_y)_{-}^{2} - b(x,\mu)p_y\right],
\end{align*}
where 
\[ 
p_-^2 \equiv \bigl(p_-\bigr)^2,\qquad p_-:=\max\{-p,0\}. 
\]
The optimal control problem associated with a fixed flow of measures $\mu$ leads to a system of coupled equations. The HJB equation characterises the value function of the representative agent, while the forward equation describes the evolution of the population distribution under the optimal feedback control.
The general mean field game system is
\begin{equation}\label{eqn:MFG system}
    \begin{cases}
        -\partial_t u(t,x) 
        + \displaystyle\sum_{y \neq x} 
        \bigg[
        \frac{1}{2} [\Delta^y u(t,x)]^2_- 
        - b(x,\mu(t))\Delta^y u(t,x) 
        \bigg]
        - f(x,\mu(t)) 
        = 0, \\
        \partial_t \mu(t,x) 
        = \displaystyle\sum_{y \neq x} 
        \Big[
        \mu(t,y) \big( [\Delta^{x}u(t,y)]_{-} + b(y,\mu(t)) \big)
        - \mu(t,x) \left( [\Delta^{y}u(t,x)]_{-} + b(x,\mu(t)) \right)
        \Big],\\
        u(T,x) = g(x,\mu(T)), \\
        \mu(0) = \mu_{0}.
    \end{cases}
\end{equation}

Let $\norm{\Delta u}_\infty = \sup_{t \in [0,T]} \max_{x,y \in \Sigma} |\Delta^y u(t,x)|$.

\subsection{Assumptions}\label{sec:assumptions}

We introduce a set of assumptions ensuring uniqueness of the equilibrium, combining monotonicity and regularity properties. In particular, we require suitable monotonicity conditions on the running and terminal costs, together with Lipschitz continuity and nonnegativity of the interaction term entering the dynamics.
\begin{assumption}\label{ass:uniqueness}
    ~
    \begin{enumerate}
        \item[(A1)] There exists a constant $C_f \ge 0$ such that
        \begin{equation*}
            \sum_{x \in \Sigma} [f(x,\mu) - f(x,\tilde{\mu})][\mu(x) - \tilde{\mu}(x)] \ge C_f \norm{\mu - \tilde{\mu}}_1^2, \qquad \forall \mu, \tilde{\mu} \in \calP(\Sigma).
        \end{equation*}
        \item[(A2)] $g$ is monotone: 
        \begin{equation*}
            \sum_{x \in \Sigma} [g(x,\mu) -g(x,\tilde{\mu})][\mu(x) - \tilde{\mu}(x)] \geq 0 \qquad \forall \mu, \tilde{\mu} \in \calP(\Sigma).
        \end{equation*}
        \item[(A3)] For every $x \in \Sigma$, the function $b(x,\cdot)$ is nonnegative and such that:
        \begin{equation*}
            |b(x,\mu) - b(x,\tilde{\mu})| \leq L_{b} \left[\prod_{z \neq x} \mu(z) + \prod_{z \neq x} \tilde{\mu}(z) \right] \norm{\mu - \tilde{\mu}}_{1} \qquad \forall \mu, \tilde{\mu} \in \calP(\Sigma).
        \end{equation*}
        \item[(A4)] The constants $L_b$, $C_f$ and $d$ are such that
        \begin{align*}
            &C_f > d(d-1) [9 L_b^2 + 2 \norm{\Delta u}_\infty L_b].
        \end{align*}
    \end{enumerate}
\end{assumption}

\begin{remark}
    An example of function $b$ that satisfies Assumption~(A3) is given by
    \begin{equation*}
        b(x,\mu) = \bigg( \prod_{y \neq x} \mu(y) \bigg)^2.
    \end{equation*}
    Note that (A4) depends on the discrete gradient $||\Delta u||$ of the solution. We provide in Subsection~\ref{sec:gradient estimate} estimates of $||\Delta u||_\infty$ in terms of the data of the problem. 
\end{remark}

\subsection{Uniqueness Result}\label{sec:uniqueness}

We now state our uniqueness result.
\begin{theorem}
    Under Assumption~\ref{ass:uniqueness}, the solution $(u,\mu)$ of \eqref{eqn:MFG system} is unique.
\end{theorem}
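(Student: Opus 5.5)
We follow the Lasry--Lions duality argument, keeping track of the extra terms produced by $b$. Let $(u,\mu)$ and $(\tilde u,\tilde\mu)$ be two solutions of \eqref{eqn:MFG system}. We assume (A4) holds with $\norm{\Delta u}_\infty$ replaced by $\max\{\norm{\Delta u}_\infty,\norm{\Delta\tilde u}_\infty\}$, as the estimates of Subsection~\ref{sec:gradient estimate} allow. Set $w=u-\tilde u$ and $m=\mu-\tilde\mu$. We differentiate
\[
\sum_x w(t,x)m(t,x)
\]
in time, integrate over $[0,T]$, and use $m(0)=0$ and (A2) at $t=T$. This gives
\[
0\ \ge\ -\sum_x w(T,x)m(T,x) \;=\; -\int_0^T \frac{d}{dt}\sum_x w\,m\,dt .
\]
We then expand the derivative with the HJB and Kolmogorov equations. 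It is convenient to write the Kolmogorov equation in divergence form, $\partial_t\mu(x)=\sum_{y}\mu(y)\lambda^*_x(y)-\dots$, so that $\sum_x w\,\partial_t m=\sum_x\sum_{y\ne x}[\mu(x)\lambda^*_y(x,\Delta u)-\tilde\mu(x)\lambda^*_y(x,\Delta\tilde u)]\Delta^y w(x)$.

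The terms group into three pieces.

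\emph{(i) Separable part.} The quadratic parts of $H$ combine with the $[\Delta u]_-$ parts of the rates in the standard way. Since $p\mapsto \tfrac12 p_-^2$ is convex with derivative $-p_-$, this yields
\[
\sum_x \mu(x)\Big[H_0(\Delta\tilde u)-H_0(\Delta u)-\partial_pH_0(\Delta u)\cdot(\Delta\tilde u-\Delta u)\Big]+(\text{symmetric in }\tilde\mu),
\]
which is nonnegative. Strong convexity on the relevant region gives a lower bound $\tfrac12\sum_x\sum_{y\ne x}(\mu(x)+\tilde\mu(x))\,([\Delta^y u]_- - [\Delta^y\tilde u]_-)^2$. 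I would keep this term, since it can absorb cross terms.

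\emph{(ii) Coupling through $f$.} This contributes $\int_0^T\sum_x (f(x,\mu)-f(x,\tilde\mu))m\,dt\ge C_f\int_0^T\norm{m}_1^2\,dt$ by (A1).

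\emph{(iii) Interaction through $b$.} This comes from $-b(x,\mu)\Delta^y u+b(x,\tilde\mu)\Delta^y\tilde u$ in the HJB and from $\mu(x)b(x,\mu)-\tilde\mu(x)b(x,\tilde\mu)$ in the rates. After adding and subtracting, the leftover terms have the form
\[
\sum_x\sum_{y\neq x} (b(x,\mu)-b(x,\tilde\mu))\big[\mu(x)\Delta^y w\ \text{or}\ \Delta^y \tilde u\, m(x)\big].
\]
The pure $b$-drift parts telescope because $b$ multiplies $\Delta^y$ linearly.

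The main obstacle is (iii): there is no sign, so we must absorb it into (i) and (ii). We bound $|b(x,\mu)-b(x,\tilde\mu)|\le L_b[\prod_{z\ne x}\mu(z)+\prod_{z\ne x}\tilde\mu(z)]\norm{m}_1$ by (A3). The product weights are the key: $\prod_{z\ne x}\mu(z)\le\mu(y)$ for each $y\neq x$, so they supply factors of $\mu$ that can be matched with the weights in (i). Terms carrying $\Delta\tilde u\,m(x)$ are bounded directly by $2L_b\norm{\Delta u}_\infty\norm{m}_1^2$ per pair $(x,y)$, giving $2d(d-1)\norm{\Delta u}_\infty L_b\norm{m}_1^2$. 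Terms carrying $\Delta^y w$ require writing $\Delta^y w$ through $[\Delta^y u]_- - [\Delta^y\tilde u]_-$ plus positive parts. I expect the positive parts to be the hardest point, and I would handle them by using the symmetric roles of $x$ and $y$. Then Young's inequality gives
\[
\mu\,|\Delta w|\cdot 3L_b\norm{m}_1\le \tfrac14\mu(\dots)^2+9L_b^2\norm{m}_1^2 ,
\]
where the quadratic term is absorbed into (i). Summing over the $d(d-1)$ ordered pairs produces the constant $d(d-1)[9L_b^2+2\norm{\Delta u}_\infty L_b]$.

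Combining (i)--(iii) gives
\[
0\ge \big(C_f-d(d-1)[9L_b^2+2\norm{\Delta u}_\infty L_b]\big)\int_0^T\norm{m}_1^2\,dt ,
\]
so (A4) forces $m\equiv0$, that is, $\mu=\tilde\mu$. The HJB equation then has a fixed, Lipschitz coefficient in $\mu$, and it is a finite-dimensional ODE system with a locally Lipschitz nonlinearity and a fixed terminal datum $g(\cdot,\mu(T))$. Cauchy--Lipschitz uniqueness therefore gives $u=\tilde u$.
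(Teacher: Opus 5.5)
Your proposal is correct and follows essentially the paper's route. It uses the pairing $\sum_x (u-\tilde u)(\mu-\tilde\mu)$, the convexity lower bound $\tfrac12\sum_x\sum_{y\neq x}(\mu(x)+\tilde\mu(x))\big([\Delta^y u(x)]_- - [\Delta^y \tilde u(x)]_-\big)^2$, and (A3) with the product weights together with the relabelling $[\Delta^y u(x)]_+ = [\Delta^x u(y)]_-$ for the positive parts; absorption follows (you via Young, the paper via nonpositivity of the quadratic forms $P,\tilde P$), and your integration over $[0,T]$ is just a cleaner form of the paper's Step 2. One small slip: the leftover interaction term is exactly $\sum_x\sum_{y\neq x}(b(x,\mu)-b(x,\tilde\mu))\,[\mu(x)\Delta^y w(x)-\Delta^y u(x)\,m(x)]$ (or the analogue with $\tilde\mu,\tilde u$), not a mix of $\mu(x)\Delta^y w$ with $\Delta^y \tilde u\, m$, though with your maximum of the two gradient norms this changes nothing.
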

\begin{proof}
Let $(u,\mu)$ and $(\tilde{u},\tilde{\mu})$ be solutions to \eqref{eqn:MFG system}.
Define the function 
$$\varphi(t) := [u(t)-\tilde{u}(t)] \cdot [\mu(t)-\tilde{\mu}(t)]
= \sum_{x \in \Sigma} [u(t,x)-\tilde{u}(t,x)][\mu(t,x)-\tilde{\mu}(t,x)]$$ 
for $t \in [0,T]$.

\emph{Step 1: the function $\varphi$ is monotone decreasing for $t \in [0,T]$.}\\
We need to show that
$
\frac{d}{dt} \varphi(t) \le 0,
$
that is,
\begin{equation*}
    \frac{d}{dt} \sum_{x \in \Sigma} [u(t,x)-\tilde{u}(t,x)][\mu(t,x)-\tilde{\mu}(t,x)] \le 0.
\end{equation*}
From now on, we will omit the time variable $t$.
To compute the derivative, we need to compute the two terms $(I)$ and $(II)$:
\begin{align*}
    &\frac{d}{dt} \sum_{x\in\Sigma} [u(x)-\tilde{u}(x)]\big[\mu(x)-\tilde{\mu}(x)\big] \\
    &= \sum_{x} \left[\frac{d}{dt}[u(x)-\tilde{u}(x)]\right]\big[\mu(x)-\tilde{\mu}(x)\big] &&(I)\\
    &\qquad + \sum_{x} [u(x)-\tilde{u}(x)]\left[\frac{d}{dt}\big[\mu(x)-\tilde{\mu}(x)\big]\right] &&(II)
\end{align*}
Developing the two terms, we have
\allowdisplaybreaks
\begin{align*}
    (I)
    &= \sum_{x} 
    \Bigg[
    \sum_{y\neq x}
    \left( \frac{1}{2}[\Delta^{y}u(x)]_{-}^2 - b(x,\mu)\Delta^{y}u(x) \right)
    - f(x,\mu) \\
    &\qquad\quad - \sum_{y\neq x} 
    \left( \frac{1}{2}[\Delta^{y}\tilde{u}(x)]_{-}^2 - b(x,\tilde{\mu})\Delta\tilde{u}(x) \right) 
    + f(x,\tilde{\mu}) 
    \Bigg]
    \big[\mu(x)-\tilde{\mu}(x)\big] \\
    &= \sum_{x}\sum_{y \neq x}
    \Bigg[
    \frac{1}{2}[\Delta^{y}u(x)]_{-}^2 - b(x,\mu)\Delta^{y}u(x)
    - \frac{1}{2}[\Delta^{y}\tilde{u}(x)]_{-}^2 + b(x,\tilde{\mu})\Delta^{y}\tilde{u}(x)
    \Bigg] 
    \big[\mu(x)-\tilde{\mu}(x)\big] \\
    &\quad + \sum_{x} \big[-f(x,\mu)+f(x,\tilde{\mu})\big] \big[\mu(x)-\tilde{\mu}(x)\big], \\
    %
    %
    %
    %
    (II)
    &= \sum_{x} [u(x)-\tilde{u}(x)] 
    \Bigg[
    \sum_{y \neq x} \Big( \mu(y)\big([\Delta^{x}u(y)]_{-} + b(y,\mu)\big) 
    - \mu(x)\big([\Delta^{y}u(x)]_{-} + b(x,\mu)\big) \Big) \\
    &\qquad\qquad\qquad\qquad\quad - \sum_{y\neq x} \Big( \tilde{\mu}(y)\big([\Delta^{x}\tilde{u}(y)]_{-} + b(y,\tilde{\mu})\big)
    + \tilde{\mu}(x)\big([\Delta^{y}\tilde{u}(x)]_{-} + b(x,\tilde{\mu})\big) \Big)
    \Bigg] \\
    &= \sum_{x}\sum_{y\neq x} 
    \Bigg(
    \mu(x) \Big[ [\Delta^{y}u(x)]_{-} + b(x,\mu) \Big] \big[\Delta^{y}u(x)-\Delta^{y}\tilde{u}(x)\big] \\
    &\qquad\qquad\qquad -\tilde{\mu}(x) \Big[ [\Delta^{y}\tilde{u}(x)]_{-} + b(x,\tilde{\mu}) \Big] \big[\Delta^{y}u(x)-\Delta^{y}\tilde{u}(x)\big]
    \Bigg) \\
    &= \sum_{x}\sum_{y\neq x} \big[\Delta^{y}u(x) - \Delta^{y}\tilde{u}(x)\big]
    \Big[
    \mu(x)\big[[\Delta^{y}u(x)]_{-}+b(x,\mu)\big] 
    - \tilde{\mu}(x)\big[[\Delta^{y}\tilde{u}(x)]_{-}+b(x,\tilde{\mu})\big]
    \Big]
\end{align*}

Putting it all together, we have
\begin{align*}
    \frac{d}{dt} \varphi(t)
    &= \sum_{x}\sum_{y\neq x}
    \left[
    \frac{1}{2}[\Delta^{y}u(x)]_{-}^{2} - b(x,\mu)\Delta^{y}u(x) 
    - \frac{1}{2}[\Delta^{y}\tilde{u}(x)]_{-}^{2} + b(x,\tilde{\mu})\Delta^{y}\tilde{u}(x)
    \right]
    \big[\mu(x)-\tilde{\mu}(x)\big] \\
    &\quad + \sum_{x} \big[-f(x,\mu)+f(x,\tilde\mu)\big] \big[\mu(x)-\tilde{\mu}(x)\big] \\
    &\quad + \sum_{x}\sum_{y\neq x} \big[\Delta^{y}u(x) - \Delta^{y}\tilde{u}(x)\big]
    \Big[
    \mu(x)\big[[\Delta^{y}u(x)]_{-}+b(x,\mu)\big] 
     - \tilde{\mu}(x)\big[[\Delta^{y}\tilde{u}(x)]_{-}+b(x,\tilde{\mu})\big]
    \Big] \\
    %
    &= \sum_{x}\sum_{y\neq x}
    \left[
    \frac{1}{2}[\Delta^{y}u(x)]_{-}^{2} - \frac{1}{2}[\Delta^{y}\tilde{u}(x)]_{-}^{2}
    \right]
    \big[\mu(x)-\tilde{\mu}(x)\big] \\
    &\quad + \sum_{x}\sum_{y\neq x} \big[\Delta^{y}u(x) - \Delta^{y}\tilde{u}(x)\big]
    \Big[\mu(x)[\Delta^{y}u(x)]_{-} - \tilde{\mu}(x)[\Delta^{y}\tilde{u}(x)]_{-}\Big] \\
    &\quad + \sum_{x} \big[-f(x,\mu)+f(x,\tilde{\mu})\big] \big[\mu(x)-\tilde{\mu}(x)\big] \\
    &\quad + \sum_{x}\sum_{y\neq x} 
    \big[ - b(x,\mu)\Delta^{y}u(x) + b(x,\tilde{\mu})\Delta^{y}\tilde{u}(x) \big] \big[\mu(x)-\tilde{\mu}(x)\big] \\
    &\quad + \sum_{x}\sum_{y\neq x} \big[\Delta^{y}u(x) - \Delta^{y}\tilde{u}(x)\big] \big[\mu(x)b(x,\mu) - \tilde{\mu}(x)b(x,\tilde{\mu})\big]
\end{align*}
Let us consider the three terms $(A)$, $(B)$ and $(C)$ defined as follows:
\begin{align*}
    (A)
    :&= \sum_{x}\sum_{y\neq x}
    \Bigg(
    \Bigg[\frac{1}{2}[\Delta^{y}u(x)]_{-}^{2} - \frac{1}{2}[\Delta^{y}\tilde{u}(x)]_{-}^{2}\Bigg]
    \big[\mu(x)-\tilde{\mu}(x)\big] \\
    &\qquad\qquad\qquad + \big[\Delta^{y}u(x) - \Delta^{y}\tilde{u}(x)\big]
    \Big[\mu(x)[\Delta^{y}u(x)]_{-} - \tilde{\mu}(x)[\Delta^{y}\tilde{u}(x)]_{-}\Big]
    \Bigg) \\
    &= \sum_{x}\sum_{y\neq x}
    \Bigg(
    \mu(x)\Bigg[
    \frac{1}{2} [\Delta^{y}u(x)]_{-}^{2} - \frac{1}{2} [\Delta^{y}\tilde{u}(x)]_{-}^{2}
    + [\Delta^{y}u(x)]_{-} [\Delta^{y}u(x) - \Delta^{y}\tilde{u}(x)]
    \Bigg] \\
    &\qquad\qquad\qquad
    - \tilde{\mu}(x)\Bigg[
    \frac{1}{2} [\Delta^{y}u(x)]_{-}^{2} - \frac{1}{2} [\Delta^{y}\tilde{u}(x)]_{-}^{2}
    + [\Delta^{y}\tilde{u}(x)]_{-} [\Delta^{y}u(x) - \Delta^{y}\tilde{u}(x)]
    \Bigg]
    \Bigg) ,\\
    (B)
    :&= \sum_{x} \big[-f(x,\mu)+f(x,\tilde{\mu})\big] \big[\mu(x)-\tilde{\mu}(x)\big] \\
    &= - \sum_{x} \big[f(x,\mu)-f(x,\tilde{\mu})\big] \big[\mu(x)-\tilde{\mu}(x)\big], \\
    %
    %
    (C)
    :&= \sum_{x}\sum_{y\neq x}
    \big[ - b(x,\mu)\Delta^{y}u(x) + b(x,\tilde{\mu})\Delta^{y}\tilde{u}(x) \big] \big[\mu(x)-\tilde{\mu}(x)\big] \\
    &\quad + \sum_{x}\sum_{y\neq x} \big[\Delta^{y}u(x) - \Delta^{y}\tilde{u}(x)\big] \big[\mu(x)b(x,\mu) - \tilde{\mu}(x)b(x,\tilde{\mu})\big] \\
    &= \sum_{x}\sum_{y\neq x}
    \mu(x)
    \Big[
    - b(x,\mu)\Delta^{y}u(x) + b(x,\tilde{\mu})\Delta^{y}\tilde{u}(x)  + b(x,\mu)\Delta^{y}u(x) - b(x,\mu)\Delta^{y}\tilde{u}(x)
    \Big] \\
    &\quad - \sum_{x}\sum_{y\neq x}
    \tilde{\mu}(x)
    \Big[
    - b(x,\mu)\Delta^{y}u(x) + b(x,\tilde{\mu})\Delta^{y}\tilde{u}(x)  + b(x,\tilde{\mu})\Delta^{y}u(x) - b(x,\tilde{\mu})\Delta^{y}\tilde{u}(x) 
    \Big] \\
    &= \sum_{x}\sum_{y\neq x} \big[ b(x,\mu) - b(x,\tilde{\mu}) \big] \big[ \Delta^{y}u(x)\tilde{\mu}(x) - \Delta^{y}\tilde{u}(x)\mu(x) \big].
\end{align*}
We need to show that 
\begin{equation*}
    \varphi'(t) = (A) + (B) + (C) \le 0.
\end{equation*}
Since $(C) \le |(C)|$, it will be easier to show
\begin{equation*}
    (A) + (B) + |(C)| \le 0.
\end{equation*}
Then,
\begin{align*}
    (A)
    &\leq \sum_{x}\sum_{y\neq x} 
    \Bigg(
    - \frac{1}{2} \mu(x) \Big[ [\Delta^{y}u(x)]_{-} - [\Delta^{y}\tilde{u}(x)]_{-} \Big]^2 
    - \frac{1}{2} \tilde\mu(x) \Big[ [\Delta^{y}u(x)]_{-} - [\Delta^{y}\tilde{u}(x)]_{-} \Big]^2
    \Bigg), 
    \intertext{\text{as $\frac12 A_-^2 - \frac12 B_-^2 + A_-(A-B) \leq -\frac12(A_- - B_-)^2$, and $\frac12 A_-^2 - \frac12 B_-^2 + B_-(A-B) \leq \frac12(A_- - B_-)^2$};}
    (B)
    &\leq - C_f \norm{\mu-\tilde{\mu}}_1^2,
    \intertext{\text{by Assumption (A1); finally, by assumption (A3)}}
    |(C)|
    &\leq
    \sum_{x}\sum_{y\neq x} L_b \left[ \prod_{z \neq x}\mu(z) + \prod_{z \neq x}\tilde{\mu}(z) \right] \norm{\mu-\tilde{\mu}}_1 \Big| \Delta^{y}u(x)\tilde{\mu}(x) - \Delta^{y}\tilde{u}(x)\mu(x) \Big|, \\
    &\leq \sum_{x}\sum_{y\neq x} L_b \left[ \prod_{z \neq x}\mu(z) + \prod_{z \neq x}\tilde{\mu}(z) \right] \norm{\mu-\tilde{\mu}}_1 \Big[ |\Delta^y u(x)||\mu(x)-\tilde{\mu}(x)| + \mu(x)|\Delta^y u(x) - \Delta^y \tilde{u}(x)| \Big] \\
    &\leq \sum_{x}\sum_{y\neq x} L_b \left[ \prod_{z \neq x}\mu(z) + \prod_{z \neq x}\tilde{\mu}(z) \right] \norm{\mu-\tilde{\mu}}_1 \mu(x)\big| [\Delta^y u(x)]_- - [\Delta^y \tilde{u}(x)]_- \big| \\
    &\qquad + \sum_{x}\sum_{y\neq x} L_b \left[ \prod_{z \neq x}\mu(z) + \prod_{z \neq x}\tilde{\mu}(z) \right] \norm{\mu-\tilde{\mu}}_1 \mu(x)\big| [\Delta^y u(x)]_+ - [\Delta^y \tilde{u}(x)]_+ \big| \\
    &\qquad + \sum_{x} L_b \left[ \prod_{z \neq x}\mu(z) + \prod_{z \neq x}\tilde{\mu}(z) \right] \norm{\mu-\tilde{\mu}}_1 \big| \mu(x) - \tilde{\mu}(x) \big| \sum_{y\neq x} \big|\Delta^y u(x)\big| \\
    &=: (C1) + (C2) + (C3).
\end{align*}
We estimate (C1) by doing
\begin{align*}
    \prod_{z \neq x}\mu(z) + \prod_{z \neq x}\tilde{\mu}(z) \leq 2;
\end{align*}
we estimate (C2) by keeping in mind that
\begin{align*}
    [\Delta^y u(x)]_+=[-\Delta^y u(x)]_ -= [\Delta^x u(y)]_-,
    \qquad
    [\Delta^y \tilde u(x)]_+=[\Delta^x \tilde u(y)]_-,
\end{align*}
and by relabelling the variables, so that
\begin{align*}
    (C2)
    &= \sum_{x}\sum_{y\neq x} L_b \left[ \prod_{z \neq x}\mu(z) + \prod_{z \neq x}\tilde{\mu}(z) \right] \norm{\mu-\tilde{\mu}}_1 \mu(x)\big| [\Delta^x u(y)]_- - [\Delta^x \tilde{u}(y)]_- \big| \\
    &= \sum_{y}\sum_{x\neq y} L_b \left[ \prod_{z \neq y}\mu(z) + \prod_{z \neq y}\tilde{\mu}(z) \right] \norm{\mu-\tilde{\mu}}_1 \mu(y)\big| [\Delta^y u(x)]_- - [\Delta^y \tilde{u}(x)]_- \big| \\
    &\leq \sum_{x}\sum_{y\neq x} L_b \bigg[ \mu(x) + \tilde{\mu}(x) \bigg] \big| [\Delta^y u(x)]_- - [\Delta^y \tilde{u}(x)]_- \big| \norm{\mu-\tilde{\mu}}_1.
\end{align*}
finally, we estimate (C3) with
\begin{align*}
    2 (d-1) L_b \norm{\Delta u}_{\infty} \norm{\mu - \tilde{\mu}}_1^2
\end{align*}
Combining all the terms, we end up with
\begin{align*}
    \frac{d}{dt}\varphi(t) 
    &\leq - C_f \norm{\mu - \tilde{\mu}}_1^2\\
    &\qquad + \sum_{x}\sum_{y\neq x} 
    \Bigg(
    - \frac12 \mu(x)^2 \Big| [\Delta^{y}u(x)]_- - [\Delta^{y}\tilde{u}(x)]_- \Big|^2 + 3 L_b \mu(x) \Big| [\Delta^{y}u(x)]_- - [\Delta^{y}\tilde{u}(x)]_- \Big| \norm{\mu - \tilde{\mu}}_1
    \Bigg) \\
    &\qquad + \sum_{x}\sum_{y\neq x} 
    \Bigg(
    - \frac12 \tilde{\mu}(x)^2 \Big| [\Delta^{y}u(x)]_- - [\Delta^{y}\tilde{u}(x)]_- \Big|^2 + L_b \tilde{\mu}(x) \Big| [\Delta^{y}u(x)]_- - [\Delta^{y}\tilde{u}(x)]_- \Big| \norm{\mu - \tilde{\mu}}_1 
    \Bigg) \\
    &\qquad + 2(d-1) L_b \norm{\Delta u}_{\infty} \norm{\mu-\tilde{\mu}}_1^2 \\
    &\leq \sum_{x}\sum_{y\neq x} 
    \Bigg(
    - \frac12 \mu(x)^2 \Big| [\Delta^{y}u(x)]_- - [\Delta^{y}\tilde{u}(x)]_- \Big|^2
    + 3 L_b \mu(x) \Big| [\Delta^{y}u(x)]_- - [\Delta^{y}\tilde{u}(x)]_- \Big| \norm{\mu - \tilde{\mu}}_1 \\
    &\qquad\qquad\qquad\qquad\qquad\qquad\qquad
    - \frac{C_f - 2(d-1)L_b \norm{\Delta u}_{\infty}}{2d(d-1)} \norm{\mu - \tilde{\mu}}_1^2 
    \Bigg) \\
    &\qquad + \sum_{x}\sum_{y\neq x} 
    \Bigg(
    - \frac12 \tilde{\mu}(x)^2 \Big| [\Delta^{y}u(x)]_- - [\Delta^{y}\tilde{u}(x)]_- \Big|^2
    +  L_b \tilde{\mu}(x) \Big| [\Delta^{y}u(x)]_- - [\Delta^{y}\tilde{u}(x)]_- \Big| \norm{\mu - \tilde{\mu}}_1 \\
    &\qquad\qquad\qquad\qquad\qquad\qquad\qquad
    - \frac{C_f - 2(d-1)L_b \norm{\Delta u}_{\infty}}{2d(d-1)} \norm{\mu - \tilde{\mu}}_1^2 
    \Bigg).
\end{align*}
We need to study when the polynomials
\begin{align*}
    &P(A,B) := -\frac{1}{2}A^2 + 3 L_b AB - \frac{C_f - 2(d-1)L_b \norm{\Delta u}_{\infty}}{2d(d-1)} B^2 \\
    &\tilde{P}(\tilde{A},B) := -\frac{1}{2}\tilde{A}^2 + L_b \tilde{A}B - \frac{C_f - 2(d-1)L_b \norm{\Delta u}_{\infty}}{2d(d-1)} B^2
\end{align*}
are $\leq 0$, where
\begin{align*}
    &A(x,y) := \mu(x) \Big| [\Delta^{y}u(x)]_- - [\Delta^{y}\tilde{u}(x)]_- \Big| \\
    &\tilde{A}(x,y) := \tilde\mu(x) \Big| [\Delta^{y}u(x)]_- - [\Delta^{y}\tilde{u}(x)]_- \Big| \\
    &B(x,y) := \norm{\mu - \tilde{\mu}}_1.
\end{align*}
It turns out that the following two conditions have to hold:
\begin{align*}
    &C_f > (d-1) [9 d L_b^2 + 2 \norm{\Delta u}_\infty L_b] \\
    &C_f > (d-1) [d L_b^2 + 2 \norm{\Delta u}_\infty L_b],
\end{align*}
the latter being implied by the former.

\emph{Step 2:}
Since $\varphi(T) \geq 0$ whenever $g$ is monotone, and $\varphi(0) = 0$, we have
\begin{align*}
    0 \leq \varphi(T) \leq \varphi(t) \leq \varphi(0) = 0 \quad \forall t \in [0,T],
\end{align*}
and therefore $\varphi$ is identically 0 on $[0,T]$ and so is $\frac{d}{dt}\varphi$. 
Let 
$$K = (d-1) [9 d L_b^2 + 2 \norm{\Delta u}_{\infty} L_b].$$ 
Fix now $t\in (0,T]$. Then
\begin{align*}
    0
    &= \frac{d}{dt}\varphi(t) \\
    &= (A) + (B) + (C) \\
    &\leq (A) - C_f \norm{\mu(t)-\tilde{\mu}(t)}_1^2 + (C) \\
    &= - (C_f - K) \norm{\mu(t)-\tilde{\mu}(t)}_1^2 - K \norm{\mu(t)-\tilde{\mu}(t)}_1^2 + (A) + (C) .
\end{align*}
By the proof in Step 1, we have
\[
- K \norm{\mu(t)-\tilde{\mu}(t)}_1^2 + (A) + (C) \leq 0
\] 
and thus we obtain 
\[
 (C_f - K) \norm{\mu(t)-\tilde{\mu}(t)}_1^2 \leq 0.
\] 
Therefore the assumption $C_f>K$ gives $\mu(t)= \tilde{\mu}(t)$ for any $t\in [0,T]$, as $\mu$ and $\tilde{\mu}$ are continuous. It follows that $u=\tilde{u}$ as the HJB equation admits a unique solution.
\end{proof}

\subsection{Gradient Estimates}\label{sec:gradient estimate}

We first provide an estimate depending of the time horizon $T$. Let as above 
\[
\norm{\Delta u}_\infty = \sup_{t \in [0,T]} \max_{x,y \in \Sigma} |\Delta^y u(t,x)|
\]

\begin{lemma}
   We have 
    \begin{equation}
        \norm{\Delta u}_{\infty}
        \le 2 \big( T \norm{f}_{\infty} + \norm{g}_{\infty} \big).
    \end{equation}
\end{lemma}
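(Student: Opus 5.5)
The bound follows from the control representation of $u$: I bound $|u(t,x)|$ by $T\norm{f}_\infty+\norm{g}_\infty$ and then take differences. Fix a solution $(u,\mu)$ of \eqref{eqn:MFG system}. By a standard verification argument for the finite-state HJB equation (linear ODE system with Lipschitz Hamiltonian in $p$ on bounded sets), $u$ coincides with the value function $u^\mu(t,x)=\inf_\alpha J^\mu(t,x,\alpha)$ for the frozen flow $\mu$. Then for every $x,y\in\Sigma$ and $t\in[0,T]$,
\[
|\Delta^y u(t,x)| \le |u(t,y)|+|u(t,x)| \le 2\sup_{z}|u(t,z)|,
\]
so it suffices to show $|u(t,z)|\le (T-t)\norm{f}_\infty+\norm{g}_\infty\le T\norm{f}_\infty+\norm{g}_\infty$, where $\norm{f}_\infty=\sup_{x,\mu}|f(x,\mu)|$ and likewise for $g$.

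For the upper bound I take the admissible control $\alpha\equiv 0$. The player then still jumps at the exogenous rates $b(x,\mu(s))$, but pays no control cost since $L(x,0)=0$. Hence
\[
u(t,z)\le J^\mu(t,z,0)=\E\Big[\int_t^T f(X(s),\mu(s))\,ds+g(X(T),\mu(T))\Big]\le (T-t)\norm{f}_\infty+\norm{g}_\infty.
\]
For the lower bound I use $L\ge 0$. For any control $\alpha$,
\[
J^\mu(t,z,\alpha)\ge \E\Big[\int_t^T f\,ds+g\Big]\ge -(T-t)\norm{f}_\infty-\norm{g}_\infty,
\]
and taking the infimum over $\alpha$ gives the same lower bound for $u(t,z)$. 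Combining the two bounds yields $\sup_{t,z}|u(t,z)|\le T\norm{f}_\infty+\norm{g}_\infty$, and therefore $\norm{\Delta u}_\infty\le 2(T\norm{f}_\infty+\norm{g}_\infty)$.

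The only delicate point is identifying the solution $u$ of the HJB equation with the value function $u^\mu$, since the factor $2$ comes entirely from the probabilistic representation. If one prefers to avoid it, a purely ODE route works too: compare $u$ with the functions $w_\pm(t)=\pm\big[(T-t)\norm{f}_\infty+\norm{g}_\infty\big]$. These are constant in $x$, so $\Delta w_\pm=0$ and $H(x,\mu,\Delta w_\pm)=0$. Consequently $w_+$ is a supersolution and $w_-$ a subsolution of the HJB equation. A comparison principle then applies, because the Hamiltonian is nonincreasing in each $u(y)$ for $y\neq x$ (as $b\ge0$ and $p\mapsto\frac12 p_-^2$ is nonincreasing), so the system is quasi-monotone. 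This comparison gives $w_-\le u\le w_+$ and the same conclusion.
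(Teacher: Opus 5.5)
Your proof is correct and follows the paper's argument: you bound $|\Delta^y u(t,x)|$ by $2\max_z|u(t,z)|$, use the zero control $\alpha\equiv 0$ for the upper bound on $u$, and use $L\ge 0$ for the lower bound. Your note that $u$ must be identified with the value function (which the paper takes for granted) and your alternative comparison argument with the $x$-independent barriers $w_\pm$ are both valid additions; the only slip is that the HJB system is a nonlinear ODE system, not a linear one.
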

\begin{proof}
    Each component of the vector $\Delta u(t,x)$ can be estimated as
    \begin{align*}
        |\Delta^y u(t,x)|
        &= |u(t,y) - u(t,x)| \\
        &\le |u(t,y)| + |u(t,x)| \\
        &\le 2 \max\{|u(t,y)|, |u(t,x)|\} \\
        &\le 2 \max_{x \in \Sigma} |u(t,x)|.
    \end{align*}
    From the definition of the value function, and choosing the control $\alpha = 0$, we have
    \begin{align*}
        u(t,x)
        &= \inf_{\alpha} \E\left[\int_{t}^{T}\left(\frac{1}{2} \sum_{y \neq x} |\alpha_y(s,X(s))|^2 + f(X(s),\mu(s))\right) ds + g(X(T),\mu(T))\right] \\
        &\le \E\left[\int_{t}^{T} f(X(s),\mu(s))ds + g(X(T),\mu(T))\right] \\
        &\le (T-t)\norm{f}_{\infty} + \norm{g}_{\infty} \\
        &\le T \norm{f}_{\infty} + \norm{g}_{\infty}.
    \end{align*}
    The other bound follows as $\tfrac{1}{2} \sum_{y \neq x} |\alpha_y(s,X(s))|^2\geq 0$, thus 
    $\norm{u}_{\infty} \le  \big( T \norm{f}_{\infty} + \norm{g}_{\infty} \big)$, and therefore the claim follows. 
\end{proof}

Note that condition (A4), when employing the above bound, would imply a smallness condition on $T$, which we prefer to avoid. 
An upper bound for $\norm{\Delta u}_\infty$ that does not depend on $T$ can also be obtained, under an additional assumption. We first need a contraction estimate uniform in time.

\begin{lemma}[Synchronous coupling and exponential contraction]
\label{lem:coupling}
Let $\Sigma$ be a finite state space endowed with the discrete metric
\[
\rho(x,y) := \mathbbm 1_{\{x\neq y\}}.
\]
Fix a measurable flow of measures $(\mu(t))_{t\in[0,T]}$ and a feedback control
$\alpha^*(t,x)$.
Assume that there exists $\kappa>0$ such that
\[
b(x,\mu) \ge \kappa,
\qquad \forall x\in\Sigma,\ \mu\in\mathcal P(\Sigma).
\]

Let $X^{t,x}$ and $X^{t,y}$ be two controlled processes starting respectively from
$x$ and $y$ at time $t$, both evolving under the same feedback $\alpha^*$, with jump
intensities
\[
\lambda_s(x,z) := \alpha^{*,z}(s,x) + b(x,\mu(s)), \qquad z\neq x.
\]
Then there exists a coupling $(X_s,Y_s)_{s\ge t}$ of
$(X^{t,x},X^{t,y})$ such that
\[
\mathbb P(X_s \neq Y_s)
\le e^{- \kappa d (s-t)} \, \rho(x,y),
\qquad \forall s\ge t.
\]
In particular,
\[
\mathbb E[\rho(X_s,Y_s)] \le e^{- \kappa d (s-t)} \rho(x,y).
\]
\end{lemma}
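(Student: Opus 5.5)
The plan is to construct the coupling explicitly as a time-inhomogeneous Markov chain on $\Sigma\times\Sigma$ that maximises the rate at which the two components coalesce. Then I will show that, as long as they have not met, this coalescence rate is at least $\kappa d$. I will assume the rates $\lambda_s(x,z)$ are bounded and measurable in $s$, which holds for the optimal feedback $[\Delta^z u(s,x)]_- + b(x,\mu(s))$ since $\Delta u$ is bounded; this makes the inhomogeneous chain well defined, e.g.\ by thinning a Poisson clock. Once the two components meet, I let them move together, using the marginal generator for both. So the diagonal is absorbing, and $\{X_s\neq Y_s\}$ is non-increasing in $s$.

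\emph{Coupled rates off the diagonal.} Fix a pair $(a,b)$ with $a\neq b$. For each $z\notin\{a,b\}$ I let both components jump simultaneously to $z$ at rate $\min\{\lambda_s(a,z),\lambda_s(b,z)\}$. I let $X$ alone jump to $z$ at the residual rate $(\lambda_s(a,z)-\lambda_s(b,z))_+$, and $Y$ alone jump to $z$ at the residual rate $(\lambda_s(b,z)-\lambda_s(a,z))_+$. In addition, $X$ alone jumps from $a$ to $b$ at rate $\lambda_s(a,b)$, and $Y$ alone jumps from $b$ to $a$ at rate $\lambda_s(b,a)$. Summing over the moves that change the first coordinate to $z$ gives exactly $\lambda_s(a,z)$ for every $z\neq a$, and symmetrically for the second coordinate. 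So the marginal generators coincide with those of $X^{t,x}$ and $X^{t,y}$, and the marginals of the coupled process have the correct laws. I will verify this by applying the pair generator to functions of one coordinate only.

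\emph{Coalescence rate and Gronwall.} From the state $(a,b)$, the transitions that land on the diagonal are the simultaneous jumps to each $z\notin\{a,b\}$, the jump $a\to b$ of $X$ and the jump $b\to a$ of $Y$. By the assumption $b(\cdot,\cdot)\ge\kappa$ and $\alpha^*\ge 0$, every $\lambda_s(\cdot,\cdot)\ge\kappa$. Hence the total coalescence rate is at least
\[
(d-2)\kappa+\kappa+\kappa=\kappa d .
\]
Let $p(s):=\P(X_s\neq Y_s)$. Since the diagonal is absorbing, Dynkin's formula applied to $\mathbbm 1_{\{X\neq Y\}}$ gives, for a.e.\ $s$,
\[
p'(s)=-\E\big[\text{coalescence rate at }(X_s,Y_s)\,\mathbbm 1_{\{X_s\neq Y_s\}}\big]\le -\kappa d\,p(s).
\]
The initial value is $p(t)=\rho(x,y)$. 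Gronwall then yields $p(s)\le e^{-\kappa d(s-t)}\rho(x,y)$, and the expectation statement is the same inequality, because $\E[\rho(X_s,Y_s)]=p(s)$.

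The main obstacle is the bookkeeping in the marginal check: the moves into $b$ and into $a$ must be counted once, and without overlap with the simultaneous jumps. With that in place, the key point is that the two "direct hit" moves each contribute $\kappa$, so the bound reaches $\kappa d$ rather than only $\kappa(d-2)$. If the marginal check turns out delicate, I would instead write the coupled generator in a basic-coupling form and compare it with the marginal generators term by term.
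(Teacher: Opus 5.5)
Your proposal is correct and follows essentially the paper's route: you build a Markovian coupling on $\Sigma\times\Sigma$ with absorbing diagonal, show its coalescence rate from any off-diagonal state is at least $\kappa d$, and apply Gronwall to $\P(X_s\neq Y_s)$. The only difference is minor: the paper lets both components jump together to every $z\in\Sigma$ at the common rate $\kappa$ (the cases $z=X_s$ and $z=Y_s$ are your two direct hits), whereas you use the larger rate $\min\{\lambda_s(a,z),\lambda_s(b,z)\}$ for $z\notin\{a,b\}$ and treat the hits explicitly, which makes the marginal bookkeeping more transparent and can only increase the coalescence rate.
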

\begin{proof}
For each $s\ge t$, the marginal dynamics of $X^{t,x}$ is governed by the generator
\[
(\mathcal L_s \varphi)(x)
= \sum_{z\neq x} \lambda_s(x,z)\big(\varphi(z)-\varphi(x)\big),
\qquad
\lambda_s(x,z) = \alpha^{*,z}(s,x) + b(x,\mu(s)).
\]
The same generator applies to $X^{t,y}$.

By assumption, $\lambda_s(x,z)\ge \kappa$ for all $x,z$ and $s$.
We decompose the jump rates as
\[
\lambda_s(x,z) = \kappa + \tilde\lambda_s(x,z),
\qquad \tilde\lambda_s(x,z) \ge 0.
\]

We construct a coupled process $(X_s,Y_s)$ as follows.
When $X_s \neq Y_s$,
\begin{itemize}
\item for each $z\in\Sigma$, with intensity $\kappa$ both components jump
simultaneously to $z$;
\item the remaining jump intensities $\tilde\lambda_s(x,z)$ and
$\tilde\lambda_s(y,z)$ are realised independently for $X$ and $Y$.
\end{itemize}
When $X_s=Y_s$, both components evolve synchronously with generator $\mathcal L_s$.
By construction, each marginal has generator $\mathcal L_s$.

Let $D_s := \rho(X_s,Y_s)=\mathbbm 1_{\{X_s\neq Y_s\}}$.
If $D_s=1$, a simultaneous jump occurs with total intensity $\kappa d$ and
forces $D$ to jump to zero. Hence, in the sense of generators,
\[
\frac{d}{ds}\mathbb E[D_s]
\le - \kappa d \, \mathbb E[D_s].
\]
Applying Gronwall's lemma yields
\[
\mathbb E[D_s] \le e^{-\kappa d(s-t)} D_t
= e^{-\kappa d(s-t)} \rho(x,y),
\]
which proves the claim.
\end{proof}

\begin{corollary}[Uniform Lipschitz bound for the value function]
Assume that $f(\cdot,\mu)$ is $L_f$-Lipschitz and $g(\cdot,\mu)$ is $L_g$-Lipschitz with respect to the discrete metric
$\rho$. Then, for all $t\in[0,T]$ and all $x,y\in\Sigma$,
\[
|u(t,x)-u(t,y)| \le \max\Big\{ L_f \frac{1}{\kappa d}, L_g\Big\}  \rho(x,y),
\qquad
\|\Delta u\|_\infty \le \max\Big\{ L_f \frac{1}{\kappa d}, L_g\Big\} .
\]
\end{corollary}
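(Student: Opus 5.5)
We compare the values at $x$ and $y$ through a representation of the value function along the optimal trajectories, and then use the synchronous coupling of Lemma~\ref{lem:coupling} to show that the two trajectories merge exponentially fast.

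\textbf{Step 1: control representation.} Fix $t\in[0,T]$ and $x\neq y$, and let $\alpha^*(s,z)=([\Delta^{w}u(s,z)]_-)_{w}$ be the optimal feedback associated with the fixed equilibrium flow $\mu$. We write $u(t,x)=J^\mu(t,x,\alpha^*)$. Since $\alpha^*$ need not be optimal from $y$, we only have $u(t,y)\le J^\mu(t,y,\alpha^*)$. This gives the one-sided bound
\[
u(t,y)-u(t,x)\le J^\mu(t,y,\alpha^*)-J^\mu(t,x,\alpha^*).
\]
Exchanging the roles of $x$ and $y$, and using the feedback that is optimal from $y$, gives the reverse inequality. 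It therefore suffices to bound $|J^\mu(t,y,\alpha)-J^\mu(t,x,\alpha)|$ uniformly over feedbacks $\alpha$ for which this quantity is well defined; in particular $\alpha$ should be bounded, which holds for $\alpha^*$ because $u$ is bounded.

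\textbf{Step 2: coupling.} Apply Lemma~\ref{lem:coupling} with the feedback $\alpha$. This is legitimate because $b\ge\kappa$. The lemma yields a coupling $(X_s,Y_s)_{s\ge t}$ with $X_t=x$, $Y_t=y$ and
\[
\P(X_s\neq Y_s)\le e^{-\kappa d(s-t)}.
\]
The costs split into three parts.
\begin{itemize}
\item Running cost $f$: since $f(\cdot,\mu)$ is $L_f$-Lipschitz for $\rho$, $\E|f(X_s,\mu(s))-f(Y_s,\mu(s))|\le L_f\,e^{-\kappa d(s-t)}$.
\item Terminal cost $g$: similarly, $\E|g(X_T,\mu(T))-g(Y_T,\mu(T))|\le L_g\,e^{-\kappa d(T-t)}$.
\item Running cost $L(X_s,\alpha(s,X_s))$ versus $L(Y_s,\alpha(s,Y_s))$: on $\{X_s=Y_s\}$ the two integrands coincide. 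On $\{X_s\neq Y_s\}$ the difference is bounded, because $\alpha^*$ is bounded by $\|\Delta u\|_\infty$. This term does not come with a clean constant, and I come back to it below.
\end{itemize}
Integrating the running-cost estimate gives
\[
\int_t^T L_f\,e^{-\kappa d(s-t)}\,ds\le \frac{L_f}{\kappa d}.
\]
Combined with the terminal bound $L_g\,e^{-\kappa d(T-t)}\le L_g$, this gives a bound of the form $L_f/(\kappa d)+L_g$.

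\textbf{Step 3: sharpening to the maximum.} To get $\max\{L_f/(\kappa d),L_g\}$ instead of the sum, I would use a comparison argument on the HJB equation rather than estimate the two pieces separately. Set $w(s)=\max_{x,y}\Delta^y u(s,x)$ and differentiate backward in time at a maximising pair. Subtracting the HJB equations at that pair, the control terms $\tfrac12[\cdot]_-^2$ have the right sign at the maximum. The $b$-terms produce a damping $-\kappa d\,w$, possibly after adding the missing diagonal terms. This leads to the differential inequality
\[
-\dot w\le L_f-\kappa d\,w,\qquad w(T)\le L_g.
\]
A maximum-principle argument then yields $w\le\max\{L_f/(\kappa d),L_g\}$.

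\textbf{Main obstacle.} The hard step is exactly this one: handling the Lagrangian discrepancy in Step 2, or equivalently the quadratic terms $\tfrac12[\Delta^w u]_-^2$ evaluated at different base points in Step 3. Since $b(x,\mu)$ depends on $x$, the rates at the two states differ. So I would first check whether $\sum_w\big(\tfrac12[\Delta^w u(x)]_-^2-\tfrac12[\Delta^w u(y)]_-^2\big)$ has a favourable sign at the maximising pair. If it does not, I would fall back on the additive constant from Step 2, which still gives a $T$-independent bound.
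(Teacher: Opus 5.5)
As written, your proposal does not yet prove the statement. The decisive step of Step 3, namely the sign of the quadratic terms and the $\kappa d$ damping, is asserted and then left as an open check. The fallback you offer does not close either. Bounding the Lagrangian discrepancy on $\{X_s\neq Y_s\}$ by $\frac{d-1}{2}\norm{\Delta u}_\infty^2$ gives $\norm{\Delta u}_\infty\le \frac{L_f}{\kappa d}+L_g+\frac{d-1}{2\kappa d}\norm{\Delta u}_\infty^2$. This is circular and yields nothing without an extra smallness-plus-continuation argument.

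The check itself is immediate. Let $M(s)=\max_z u(s,z)-\min_z u(s,z)$ (your $w(s)$), and pick $x^+\in\arg\max u(s,\cdot)$, $x^-\in\arg\min u(s,\cdot)$. Then $\Delta^w u(s,x^+)\le 0\le\Delta^w u(s,x^-)$ for every $w$. Hence the quadratic part of $H(x^-,\mu,\Delta u(x^-))$ vanishes. The quadratic part of $H(x^+,\mu,\Delta u(x^+))$ is nonnegative and enters $-\dot M=-H(x^+)+H(x^-)+f(x^+)-f(x^-)$ with a minus sign. For the $b$-terms, $b\ge\kappa$ gives
\[
b(x^+,\mu)\sum_{w\neq x^+}\big(u(x^+)-u(w)\big)+b(x^-,\mu)\sum_{w\neq x^-}\big(u(w)-u(x^-)\big)\ge\kappa\big[(d-2)M+2M\big]=\kappa dM.
\]
Each $w\notin\{x^+,x^-\}$ contributes $M$ once the two sums are paired, and $w=x^-$, $w=x^+$ contribute $M$ each; these are your ``diagonal'' terms. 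Since $M$ is Lipschitz, and at its differentiability points $\dot M$ may be computed at any maximising pair, you get $-\dot M\le L_f-\kappa dM$ a.e.\ with $M(T)\le L_g$. Hence
\[
M(t)\le e^{-\kappa d(T-t)}L_g+\big(1-e^{-\kappa d(T-t)}\big)\frac{L_f}{\kappa d}\le\max\Big\{\frac{L_f}{\kappa d},L_g\Big\}.
\]

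The paper instead follows your Steps 1--2: the coupling lemma applied with a common optimal feedback. There the maximum comes for free, with no separate sharpening. Keeping the exact factors gives $\frac{L_f}{\kappa d}(1-e^{-\kappa d(T-t)})+L_g e^{-\kappa d(T-t)}$, which is a convex combination of $\frac{L_f}{\kappa d}$ and $L_g$. You lost this by bounding the two factors separately by $1$.

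However, the paper writes $J(t,x,\alpha^*)-J(t,y,\alpha^*)$ as the $f$- and $g$-differences only. It thereby silently drops $\E\int_t^T[L(X_s,\alpha^*(s,X_s))-L(Y_s,\alpha^*(s,Y_s))]\,ds$, which is exactly the sign-indefinite term you flagged. So your obstacle is genuine, and the paper's coupling argument does not resolve it. Your comparison route uses only the HJB equation and $b\ge\kappa$, not the coupling lemma. It handles the Lagrangian through the sign structure of $H$ at the extremal states and yields the same bound rigorously.
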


Note that assumption (A4), when applying this bound, implies a smallness condition on $L_f, L_g, L_b$. 

\begin{proof}
Let $\alpha^*(t,x)$ be an optimal feedback for the control problem starting in $(t,x)$ with fixed flow $\mu(t)$, and let $X$ and $Y$ be as in the above lemma. 
We consider now the difference of the value functions: 
\begin{align*}
    u(t,x) - u(t,y) 
    &\leq J(t,x,\alpha^*) - J(t,y,\alpha^*) \\
    &= \E\bigg[ \int_{t}^{T} [f(X_s,\mu_s) - f(Y_s,\mu_s)]ds  
    + g(X_T,\mu_T) - g(Y_T,\mu_T)\bigg]\\
    &\leq L_f \int_{t}^{T} \E\left[ \rho(X_s , Y_s ) \right] ds
    + L_g \E\left[ \rho(X_T , Y_T ) \right]\\
    &\leq L_f \int_t^T e^{-\kappa d  (s-t)} \rho(x,y)ds
    + L_g e^{-\kappa d  (T-t)} \rho(x,y)  \\
    &= \Big( L_f \frac{1}{\kappa d} \big(1- e^{-\kappa d (T-t)} \big) 
    + L_g e^{-\kappa d (T-t)} \Big)\rho(x,y)  \\
    &\leq \max\Big\{ L_f \frac{1}{\kappa d}, L_g\Big\} \rho(x,y).
\end{align*}
The opposite inequality is derived in the same way, and thus the claim follows. 
\end{proof}

\section{Conclusion}\label{sec:conclusion}

We study the uniqueness of finite-state mean field game systems with non-separable Hamiltonians induced by distribution-dependent transition rates. In this setting, classical Lasry-Lions arguments do not apply directly. We prove that uniqueness can still be obtained under a combination of monotonicity assumptions on the costs and quantitative conditions on the interaction term.
The results provide a tractable framework for MFG models with interaction at the level of transition rates and highlight how classical techniques can be adapted beyond the separable case.

Possible extensions include relaxing the assumptions, considering more general interaction structures, and studying the associated master equation.


\end{document}